\documentclass[12pt]{article}
\usepackage[a4paper,margin=1.0in]{geometry}

\usepackage[colorlinks,citecolor=magenta,linkcolor=black]{hyperref}
\pdfpagewidth=\paperwidth \pdfpageheight=\paperheight
\usepackage{amsfonts,amssymb,amsthm,amsmath,eucal,tabu,url}
\usepackage{pgf}
 \usepackage{array}
 \usepackage{tikz-cd}
 \usepackage{pstricks}
 \usepackage{pstricks-add}
 \usepackage{pgf,tikz}
 \usetikzlibrary{automata}
 \usetikzlibrary{arrows}
 \usepackage{indentfirst}
 \pagestyle{myheadings}
\usepackage{tabularx} 


\theoremstyle{plain}
\newtheorem{thm}{Theorem}[section]
\newtheorem{theorem}[thm]{Theorem}

\newtheorem{proposition}[thm]{Proposition}

\theoremstyle{definition}
\newtheorem{definition}[thm]{Definition}
\newtheorem{remark}[thm]{Remark}
\newtheorem{example}[thm]{Example}

\newtheorem{thevarthm}[thm]{\varthmname}

\newenvironment{varthm*}[1]{\trivlist\item[]{\bf #1.}\it}{\endtrivlist}


\renewcommand\geq{\geqslant}

\renewcommand\leq{\leqslant}

\newcommand\be{\begin{eqnarray*}}
\newcommand\ee{\end{eqnarray*}}

\newcommand\newop[2]{\def#1{\mathop{\rm #2}\nolimits}}
\newop\log{log}
\newop\ord{ord}
\newop\Gal{Gal}
\newop\SL{SL}
\newop\Bl{Bl}
\newop\mult{mult}
\newop\mass{mass}
\newop\div{div}
\newop\codim{codim}
\newop\sing{sing}
\newop\vdim{vdim}
\newop\edim{edim}
\newop\Ass{Ass}
\newop\size{size}
\newop\reg{reg}
\newop\satdeg{satdeg}
\newop\supp{supp}
\newop\Neg{Neg}
\newop\Nef{Nef}
\newop\Nefh{Nef_H}
\newop\Eff{Eff}
\newop\Zar{Zar}
\newop\MB{MB}
\newop\MBxC{MB\mathit{(x,C)}}
\newop\NnB{NnB}
\newop\Bigg{Big}
\newop\Effbar{\overline{\Eff}}

\def\keywordname{{\bfseries Keywords}}%
\def\keywords#1{\par\addvspace\medskipamount{\rightskip=0pt plus1cm
\def\and{\ifhmode\unskip\nobreak\fi\ $\cdot$
}\noindent\keywordname\enspace\ignorespaces#1\par}}
\def\subclassname{{\bfseries Mathematics Subject Classification
(2020)}\enspace}
\def\subclass#1{\par\addvspace\medskipamount{\rightskip=0pt plus1cm
\def\and{\ifhmode\unskip\nobreak\fi\ $\cdot$
}\noindent\subclassname\ignorespaces#1\par}}

\begin{document}
\title{Arrangements of inflectional lines, hyperosculating conics, and plane cubics}
\author{Artur Bromboszcz}
\date{\today}
\maketitle

\thispagestyle{empty}
\begin{abstract}
In the present note we study some arrangements of inflectional lines, hyperosculating conics, and a nodal plane cubic that are free. Moreover, we study weak combinatorics of arrangements consisting of lines, conics, and elliptic curves providing natural constraints on them.
\keywords{arrangements of rational curves, weak combinatroics, BMY inequalities}
\subclass{14N25, 14C20, 32S25}
\end{abstract}
\section{Introduction}
Our main goal in this short note is to deliver new constructions of free curves using inflectional lines and hyperosculating conics to nodal cubic curves. Our research is strictly motivated by the recent developments, namely by a paper due to Dimca, Ilardi, Pokora and Sticlaru \cite{DIPS}, where the authors constructed new examples of arrangements consisting of smooth plane curves and inflectional lines, and by a paper due to Dimca, Ilardi, Malara and Pokora \cite{DIMP}, where the authors construct examples of free plane curve using hyperosculating conics to smooth and nodal plane cubic curve. Here we want to make a merger of these two approaches by constructing new examples of free arrangements consisting of inflectional lines and hyperosculating conics to nodal plane cubic curves. Our result, Theorem \ref{thmIO}, provides a classification of such free arrangements. Moreover, we provide constraints on the weak combinatorics of such curve arrangements under the assumption that we have some naturally selected types of singularities. Our second main result is a Hirzebruch-type inequality for such selected arrangements, see Theorem \ref{HirE}.
\section{Preliminary definitions}
Let $S= \mathbb{C}[x,y,z]$ be the graded polynomial ring in three variables $x,y,z$ with complex coefficients. Let $C: f=0$ be a reduced curve of degree $d$ in the complex projective plane $\mathbb{P}^{2}_{\mathbb{C}}$. We denote by $J_f= \langle f_x,f_y,f_z \rangle$ the Jacobian ideal, so the ideal in $S$ generated by the partial derivatives.

Consider the graded $S$-module of Jacobian syzygies of $f$, namely $$AR(f)=\{(a,b,c)\in S^3 : af_x+bf_y+cf_z=0\}.$$
We define the minimal degree of non-trivial Jacobian syzygies for $f$ as
$${\rm mdr}(f):=\min\{k : AR(f)_k\neq (0)\}.$$
Now we are going to define the freeness of a reduced plane curve $C \, : f=0$ in the language of $r:={\rm mdr}(f)$ of $C$, and the total Tjurina number of $C$ which will be denoted by $\tau(C).$
\begin{definition} 
A reduced plane curve $C \, : f=0$ is free if and only if $r \leq (d-1)/2$ and
\begin{equation}\label{F1}  
 \tau(C)=(d-1)^2-r(d-r-1).
\end{equation}
\end{definition}
For the completeness of the note, we need to recall definitions of inflectional lines and hyperosculating conics. The Hessian of $f$ is defined as 
\begin{equation}
\label{eq2}
H=H(C)=\det \left(
  \begin{array}{ccccccc}
     f_{xx} & f_{xy}& f_{xz}  \\
     f_{xy} & f_{yy}& f_{yz}\\
     f_{xz} & f_{yz}& f_{zz}  \\
   \end{array}
\right).
\end{equation}

It is known that the intersection $X_C=C \cap H(C)$ consists exactly of the set of inflection points $I_C$ of $C$ union with the set of singular points $Y_C$ of $C$. Recall that
if $p \in C$ is a smooth point of this curve, and $T_pC$ denotes the projective line tangent to $C$ at $p$, then the
{\it inflection order} of $p$ is by definition
\begin{equation}
\label{ip}
\iota_p(C)=(C,T_pC)_p-2,
\end{equation}
where $(C,T_pC)_p$ denotes the intersection multiplicity of the curves
$C$ and $T_pC$ at the common point $p$. Moreover, we say that $p$ is an inflection point of $C$, i.e.,  $p \in I_C$, if and only if $\iota_p(C) >0$, and the associated tangent line at $p$ is called as \textit{the inflectional line}. 

In 1859, Cayley proved in \cite{Cayley} that for a smooth point on a plane curve of degree $d \geq 3$ there exists a unique osculating conic. It has local intersection multiplicity with the curve at the point of contact at least $5$. Then he observed that a plane curve, as for the flexes, has points where the osculating conics has contact $\geq 6$. He called these points sextactic points and the associated conics are called \textit{hyperosculating conics}. It is well-known that if $E$ is an elliptic curve, then $E$ has exactly $27$ sextactic points (so exactly $27$ hyperosculating conics), and if $C$ is a nodal cubic curve, then it has exactly $3$ hyperosculating conics, and these numbers follow form the well-known formula due to Coolidge \cite[Chapter VI, Theorem 17]{Coolidge}.

\section{Free arrangements of lines, conics and cubics}

Here is our classification result devoted to arrangements of free curves.
\begin{theorem}
\label{thmIO} 

1) If $C$ is an arrangement consisting of a nodal cubic curve, one hyperosculating conic, and one inflectional line, then the arrangement is free with the exponents $(2,3)$.

2) If $C$ is an arrangement consisting of a nodal cubic curve, one hyperosculating conic, and two distinct inflectional lines, then the arrangement is free with the exponents $(3,3)$.

\end{theorem}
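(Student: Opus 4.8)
The plan is to verify, for each arrangement, the two numerical conditions packaged in the definition of freeness: compute the global Tjurina number $\tau(C)$ and the invariant $r={\rm mdr}(f)$, and then check that $r\le (d-1)/2$ together with $\tau(C)=(d-1)^2-r(d-r-1)$. For part 1) the ambient degree is $d=3+2+1=6$ and the targets are $r=2$, $\tau(C)=25-2\cdot 3=19$; for part 2) they are $d=7$, $r=3$, $\tau(C)=36-3\cdot 3=27$. Note already that solving $(d-1)^2-r(d-r-1)=\tau(C)$ for $r$ pins down the exponent uniquely in each case, so the real content is to confirm that ${\rm mdr}(f)$ actually equals this value.

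First I would read off the local singularities, which determine $\tau(C)$. The node of the cubic $E$ is an $A_1$ point, contributing $\tau=1$. A hyperosculating conic $Q$ meets $E$ with multiplicity at least $6$ at its sextactic point, and since $\deg Q\cdot\deg E=6$ saturates the B\'ezout bound, the contact is exactly $6$ and $Q$ meets $E$ nowhere else; two smooth branches meeting with intersection multiplicity $6$ give an $A_{11}$ singularity, hence $\tau=11$ there. Similarly an inflectional line $L$ has contact exactly $3$ with $E$ at the flex (again B\'ezout is saturated, $1\cdot 3=3$), producing an $A_5$ point with $\tau=5$; in particular neither $Q$ nor $L$ can pass through the node, so it stays $A_1$. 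The remaining intersections are transversal: $Q\cap L$ consists of two nodes, and in part 2) the second inflectional line $L'$ adds a further $A_5$, a second pair of nodes from $Q\cap L'$, and one node from $L\cap L'$. Summing, part 1) gives $1+11+5+(1+1)=19$ and part 2) gives $1+11+5+5+2+2+1=27$, exactly as required. To make this rigorous I would fix an explicit normal form for the nodal cubic, compute its three flexes and three sextactic points, and verify that the chosen conic and line(s) intersect each other transversally and that the distinguished points (node, flex(es), sextactic point) are pairwise distinct.

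The computation of $r={\rm mdr}(f)$ is, I expect, the main obstacle. The cheap half is the lower bound: the du Plessis--Wall inequalities give $\tau(C)\ge (d-1)(d-1-r)$ when $r<d/2$, and $r=1$ would force $\tau\ge 5\cdot 4=20$ in part 1) and $\tau\ge 6\cdot 5=30$ in part 2), both contradicting the counts above; since $C$ is plainly not a cone this rules out $r\le 1$, and in the explicit model one checks $AR(f)_1=0$ directly in any case. The hard half is the matching upper bound. For part 1) it suffices to exhibit one Jacobian syzygy of degree $2$, i.e.\ $AR(f)_2\ne 0$; then $r=2$. For part 2) the subtlety is that $\tau=27$ is compatible with $r=2$ (the non-free regime, whose free value would be $28$), so I must additionally show that \emph{no} quadratic syzygy exists, $AR(f)_2=0$, forcing $r\ge 3$, and then produce a cubic syzygy $AR(f)_3\ne 0$ to get $r\le 3$. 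I would attempt to construct these relations by hand from the logarithmic derivations tangent to the components $E$, $Q$, $L$, using that the high contact of $Q$ and of $L$ with $E$ forces a low-degree relation among $f_x,f_y,f_z$; failing a clean closed form, I would verify the (non)vanishing of $AR(f)_2$ and $AR(f)_3$ in the explicit model by a Gr\"obner-basis computation.

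Once $r=2$ (resp.\ $r=3$) is established, the conclusion is immediate: in both cases $r\le (d-1)/2$ (with equality in part 2), and the Tjurina numbers computed above equal $(d-1)^2-r(d-r-1)$, so the defining condition of freeness holds and the arrangement is free with exponents $(r,d-1-r)=(2,3)$ and $(3,3)$ respectively. Equivalently, once $r$ is known to lie in the range $r<d/2$, the equality of $\tau(C)$ with the du Plessis--Wall upper bound $(d-1)^2-r(d-r-1)$ yields freeness directly.
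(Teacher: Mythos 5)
Your proposal is correct and follows essentially the same route as the paper: fix an explicit normal form for the nodal cubic together with its hyperosculating conics and inflectional lines, compute $\tau=19$ (resp.\ $27$) from the local singularity data (the $A_1$, $A_5$, and $A_{11}$ points, with your B\'ezout-saturation argument rigorously justifying the counts the paper reads off from Tono's explicit equations), determine $r={\rm mdr}(f)=2$ (resp.\ $3$) by a computer-algebra computation (the paper uses {\sc Singular} exactly where you propose a Gr\"obner-basis check), and conclude freeness from $\tau(C)=(d-1)^2-r(d-r-1)$. Your two refinements --- the du Plessis--Wall lower bound ruling out $r\le 1$, and the explicit observation that in part 2) the value $\tau=27$ is also consistent with a nearly-free curve with $r=2$, so that $AR(f)_2=0$ genuinely must be verified rather than inferred from the numerology --- are sound additions to, not deviations from, the paper's argument.
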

\begin{proof}
Recall that all nodal cubics are projectively equivalent and we can take
$$E : \quad x^3 + y^{3} - xyz = 0$$
as our main object of studies.
Following the lines in \cite[Appendix]{S}, we can determine the three sextactic points of $E$, namely
$$P_{1} = (1:1:2), \quad P_{2} = ( \omega: \omega^2 :2), \quad P_{3} = (\omega^2: \omega:2),$$
where $\omega = e^{2\pi \iota/3}$, and the corresponding hyperosculating conics, namely
\begin{itemize}
    \item $C_{1} \, : \, 21(x^2+y^2 )-22xy-6(x+y)z+z^2 =0,$
    \item $C_{2} \, : \, 21(\omega x^2 + \omega^2 y^2 )-22xy-6(\omega^2 x+\omega y)z+z^2 =0,$
    \item $C_{3} \, : \, 21(\omega^2 x^2 +\omega y^2)-22xy-6(\omega x+ \omega^2 y)z+z^2 =0.$
\end{itemize}
Recall that at each point $P_{i}$, where the curves $E$ and $C_{i}$ meet, we get $A_{11}$-singularity.
Furthermore, as it was pointed out in \cite[Appendix]{S}, we have three flexes, namely
$$Q_{1} = (1:-1:0), \quad Q_{2} = ( 1: - \omega :0), \quad Q_{3} = (1: \omega + 1:0),$$
and the corresponding inflectional lines are the following:
\begin{itemize}
    \item $\ell_{1} \, : \, 3x + 3y + z = 0,$
    \item $\ell_{2} \, : \, 3x + 3\omega^2 y + \omega z = 0,$
    \item $\ell_{3} \, : \, 3x + 3\omega y - (\omega +1)z = 0.$
\end{itemize}
Observe that each point $Q_{j}$, where the curves $E$ and $\ell_{j}$ meet, we get $A_{5}$-singularity. Having the collected data at hand, we can proceed with our classification. 

First of all, let us consider the arrangement $F_{i,j} = \{E,C_{i}, \ell_{j}\}$ with $i,j \in \{1,2,3\}$. Observe that $\tau(F_{i,j})=19$ and this follows from the fact that our arrangement has three $A_{1}$ points, one $A_{5}$, and one $A_{11}$ point as singularities. Using \verb}SINGULAR} we can check that $r:={\rm mdr}(F_{i,j}) =2$ and hence we have
$$r^{2} - r(d-1) + (d-1)^2 = 4 - 10 + 25 = 19 = \tau(C_{i}),$$
so the arrangement $C_{i}$ is free with the exponents $(d_{1},d_{2}) = (r,d-1-r) = (2,3)$ for each $i,j \in \{1,2,3\}$.

Let us consider now $C_{i,j,k} = \{E,C_{i},\ell_{j},\ell_{k}\}$ with $i,j,k \in \{1,2,3\}$ and $j\neq k$. Observe that $\tau(C_{i,j,k})=27$ and this follows from the fact that we have exactly six $A_{1}$ points, two $A_{5}$ points, and one $A_{11}$ point as singularities. Moreover, using \verb}SINGULAR} we can check that ${\rm mdr}(C_{i,j,k})=3$ and hence we have
$$r^{2} - r(d-1) + (d-1)^2 = 9 - 18 + 36 = 27 = \tau(C_{i,j,k}),$$
so the arrangement $C_{i,j,k}$ is free with the exponents $(d_{1},d_{2}) = (r,d-1-r) = (3,3)$ for $i,j,k \in \{1,2,3\}$ and each pair of indices $j \neq k$.
\end{proof}
Using the above considerations, we can find a slightly different example of a reduced free plane curve.
\begin{example}
Let us consider the arrangement $\mathcal{C} = \{E,C_{1},\ell\}$, where 
$E$ and $C_{1}$ are the curves indicated in Theorem \ref{thmIO}, and 
$$\ell : \quad x-y=0.$$
Observe that the line $\ell$ is passing through the node of $E$ and the sextactic point. Using this observation we can detect all singularities of the curve $\mathcal{C}$, namely we have one $A_{1}$ point, one ordinary triple point $D_{4}$, and one singularity of type $D_{14}$, and hence $\tau(\mathcal{C})=19$. Since ${\rm mdr}(\mathcal{C})=2$, we have construed an example of a reduced free curve.
\end{example}
\begin{example}
Let us consider the arrangement $\mathcal{C} = \{E,\ell_{1},\ell_{2},\ell_{3}\}$, where the curves are indicated in Theorem \ref{thmIO}. The curve $\mathcal{C}$ is free. Observe that we have four singularities of type $A_{1}$ and three singularities of type $A_{5}$ and hence $\tau(\mathcal{C})=19$. Since ${\rm mdr}(\mathcal{C})=2$ it allows us to conclude that $\mathcal{C}$ is indeed free.
\end{example}
\begin{remark}
Using Theorem \ref{thmIO} we can also construct some examples of \textbf{nearly-free} curves. For instance, we can take, the arrangement $\mathcal{C}_{1} = \{E, C_{1}, \ell_{1},\ell_{2},\ell_{3}\}$, or $\mathcal{C}_{2} = \{E, C_{1}, C_{2},\ell_{2}\}$ and check that $\tau(\mathcal{C}_{1}) = \tau(\mathcal{C}_{2})=36$, which allows us to conclude that the constructed arrangements are nearly-free.
\end{remark}
\section{Combinatorial constraints on arrangements of lines, conics, and cubics with certain simple singularities}
In this section we want to focus on combinatorial constraints of arrangements consisting of lines, smooth conics, and elliptic curves in the complex projective plane admitting singularities of type $A_{1}, A_{3}, D_{4}, A_{5}, A_{7}, A_{11}, D_{14}$. Our choice of singularities is motivated by arrangements constructed in the previous section in the context of the freeness. First of all, we have the following naive count.
\begin{proposition}
\label{count}
Let $\mathcal{C} = \{\ell_{1}, ..., \ell_{d}, C_{1}, ..., C_{k}, \mathcal{E}_{1}, ..., \mathcal{E}_{l}\} \subset \mathbb{P}^{2}_{\mathbb{C}}$ be an arrangement of $d\geq 1$ lines, $k\geq 1$ smooth conics, and $l\geq 1$ elliptic curves. Assume that the arrangement $\mathcal{C}$ admits $n_{2}$ singularities of type $A_{1}$, $t_{3}$ singularities of type $A_{3}$, $n_{3}$ singularities of type $D_{4}$, $t_{5}$ singularities of type $A_{5}$, $t_{7}$ singularities of type $A_{7}$, $t_{11}$ singularities of type $A_{11}$, and $d_{14}$ singularities of type $D_{14}$. Then we have the following count
\begin{multline}
(d+2k+3l)^{2} - d - 4k - 9l = d(d-1) + 4k(k-1) + 9l(l-1) +4dk + 6dl + 12kl =  \\ 2n_{2} + 4t_{3} + 6n_{3} + 6t_{5} + 8t_{7} + 12t_{11} + 16d_{14}.
\end{multline}
\end{proposition}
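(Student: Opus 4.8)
The plan is to read the left-hand side as twice the total Bézout intersection number of $\mathcal{C}$ and the right-hand side as twice the sum of the local delta-invariants of its singularities, and then to match the two quantities singularity by singularity. The key structural point is that every component of $\mathcal{C}$ is smooth (a line, a smooth conic, or a smooth elliptic curve), so that the singular points of the union are exactly the points where at least two components meet and every local branch is smooth.

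The first equality I would dispose of as a formal identity. Writing $D=d+2k+3l$ for the total degree and recording that the components have degrees $1,2,3$, so that $\sum_i e_i^2 = d+4k+9l$, one simply expands $(d+2k+3l)^2$ and subtracts $d+4k+9l$ to recover the middle expression $d(d-1)+4k(k-1)+9l(l-1)+4dk+6dl+12kl$. For the geometric equality, label the $N=d+k+l$ components $C_1,\dots,C_N$ with degrees $e_1,\dots,e_N$. Since $2\sum_{i<j}e_ie_j=\big(\sum_i e_i\big)^2-\sum_i e_i^2=D^2-(d+4k+9l)$, the left-hand side equals $2\sum_{i<j}e_ie_j$, and Bézout's theorem gives
\[
\sum_{i<j}e_ie_j=\sum_{i<j}(C_i\cdot C_j)=\sum_{p}\ \sum_{i<j}(C_i\cdot C_j)_p,
\]
where $p$ ranges over the points at which at least two components meet. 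Because each component through $p$ contributes a single smooth branch, the local singularity there has only smooth branches, and for such a singularity one has $\delta_p=\sum_{i<j}(C_i\cdot C_j)_p$. Hence $\sum_{i<j}e_ie_j=\sum_p\delta_p$.

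It then remains to tabulate $\delta_p$ for the seven admitted types using $\delta=(\mu+r-1)/2$, where $\mu$ is the Milnor number and $r$ the number of branches: the odd types $A_\mu$ consist of two smooth branches of contact order $(\mu+1)/2$, giving $\delta=1,2,3,4,6$ for $A_1,A_3,A_5,A_7,A_{11}$; the ordinary triple point $D_4$ is three pairwise transverse smooth branches, giving $\delta=3$; and $D_{14}$ is two smooth branches of contact $6$ together with a third transverse branch, giving $\delta=(14+3-1)/2=8$. Substituting into $\sum_p\delta_p=1\cdot n_2+2\cdot t_3+3\cdot n_3+3\cdot t_5+4\cdot t_7+6\cdot t_{11}+8\cdot d_{14}$ and doubling produces exactly the claimed right-hand side $2n_2+4t_3+6n_3+6t_5+8t_7+12t_{11}+16d_{14}$.

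The only step requiring genuine care is this local identification of each singularity type with its delta-invariant, equivalently with the sum of pairwise branch intersection multiplicities; in particular one must pin down the branch configuration of $D_{14}$ (and check that the listed $A$-types are precisely the odd ones, which are the only $A_\mu$ realizable by smooth branches). Everything else is formal algebra together with the localization of Bézout's theorem.
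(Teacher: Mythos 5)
Your proof is correct and follows essentially the same route as the paper, whose proof is just the one-line remark that the identity ``follows from B\'ezout's Theorem and from counting of the intersection indices for singular points.'' You supply exactly the details the paper omits --- the reduction of the left-hand side to $2\sum_{i<j}e_ie_j$, the localization via B\'ezout, and the correct tabulation $\delta=1,2,3,3,4,6,8$ for $A_1,A_3,D_4,A_5,A_7,A_{11},D_{14}$ via $\delta=(\mu+r-1)/2$, all of which check out.
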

\begin{proof}
It follows from B\'ezout's Theorem and from counting of the intersection indices for singular points that our curve $\mathcal{C}$ admits.
\end{proof}
Now we can present our main result in this section.
\begin{theorem}
\label{HirE}
Let $\mathcal{C} = \{\ell_{1}, ..., \ell_{d}, C_{1}, ..., C_{k}, \mathcal{E}_{1}, ..., \mathcal{E}_{l}\} \subset \mathbb{P}^{2}_{\mathbb{C}}$ be an arrangement of $d\geq 1$ lines, $k\geq 1$ smooth conics, and $l\geq 1$ elliptic curves. Assume that the arrangement $\mathcal{C}$ admits $n_{2}$ singularities of type $A_{1}$, $t_{3}$ singularities of type $A_{3}$, $n_{3}$ singularities of type $D_{4}$, $t_{5}$ singularities of type $A_{5}$, $t_{7}$ singularities of type $A_{7}$, $t_{11}$ singularities of type $A_{11}$, and $d_{14}$ singularities of type $D_{14}$. Then the following inequality holds:
\begin{equation}
27l + 8k + n_{2} + \frac{3}{4}n_{3} \geq d + \frac{5}{2}t_{3} + 5t_{5} + \frac{29}{4}t_{7} + \frac{23}{2}t_{11}+ \frac{79}{8}d_{14}.
\end{equation}
\end{theorem}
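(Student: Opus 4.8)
The plan is to derive the inequality from the Bogomolov--Miyaoka--Yau (BMY) inequality applied to the pair $(\mathbb{P}^2,\mathcal{C})$, in the orbifold formulation used throughout the Hirzebruch-type literature (which refines the naive logarithmic bound $\bar c_1^2\le 3\bar c_2$ and reproduces Hirzebruch's classical coefficients for line arrangements). Write $m=d+2k+3l$ for the total degree of $\mathcal{C}$, so that $K_{\mathbb{P}^2}+\mathcal{C}$ has class $(m-3)H$ and $\bar c_1^2=(m-3)^2$ before resolution. The first step is to pass to a suitable branched abelian cover $X\to\mathbb{P}^2$ ramified along the components of $\mathcal{C}$ (equivalently, to assign orbifold weights), resolve the preimages of the singular points, and apply $c_1^2\le 3c_2$ on the smooth model. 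The two global inputs are the self-intersection term, governed by $m$, and the second Chern/Euler term, which I would read off from the topological Euler characteristic of the complement together with the Euler characteristics of the components. Since the genus of each curve enters this Euler term, the smooth conics and the elliptic curves --- degree $2$, genus $0$ and degree $3$, genus $1$ respectively --- contribute differently from the lines; this is the structural origin of the distinguished coefficients $8k$ and $27l$ on the favourable side of the inequality.

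The second step is local. For each admissible singularity type I would compute its contribution to the covered/resolved Chern numbers by taking the minimal embedded resolution and tracking the discrepancies and the exceptional configuration. Every $A_{2j-1}$ point arises from two smooth branches meeting with intersection multiplicity $j$, while $D_4$ and $D_{14}$ come from three smooth branches, so these resolutions are explicit chains of blow-ups. The associated local orbifold Euler numbers are what produce the coefficients $\tfrac52,\,5,\,\tfrac{29}{4},\,\tfrac{23}{2},\,\tfrac{79}{8}$ attached to $t_3,t_5,t_7,t_{11},d_{14}$, whereas the nodes and ordinary triple points $A_1,D_4$ should emerge with the favourable Hirzebruch weights $1$ and $\tfrac34$. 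The Milnor numbers $\mu(A_n)=n$, $\mu(D_n)=n$ and the branch numbers feed directly into these computations, the former via the identity $e(\mathcal{C})=3m-m^2+\sum_p\mu_p$.

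The third step is bookkeeping. After assembling the global and local contributions, the raw inequality will still contain quadratic expressions in $d,k,l$ coming from $(m-3)^2$ and from $e(\mathcal{C})$. These are precisely the terms controlled by Proposition \ref{count}: substituting the displayed B\'ezout count, which I read as $m^2-(d+4k+9l)=\sum_p 2\delta_p$ with $\delta_p$ the local delta-invariant (half the tabulated coefficient), eliminates the quadratic part and collapses everything to a linear inequality in the combinatorial data. I would perform this substitution last, after the per-singularity coefficients are collected, so that the $A_1$ and $D_4$ terms migrate to the left with coefficients $1$ and $\tfrac34$ and the remaining terms assemble into the stated bound.

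The main obstacle is twofold. Conceptually, the delicate point is verifying that the BMY inequality genuinely applies --- that the relevant orbifold pair is of log general type with canonical divisor in the nef range --- since the classical Hirzebruch bound fails for pencils and near-pencils, and an analogous degenerate configuration must be ruled out under the hypotheses $d,k,l\ge 1$. Computationally, the genuine work lies in the embedded resolutions of the deepest singularities $A_{11}$ and $D_{14}$: these require the longest blow-up chains, and the exact half-integer coefficients $\tfrac{23}{2}$ and $\tfrac{79}{8}$ depend on correctly propagating the discrepancies and the self-intersection numbers through every stage, so this is where an arithmetic slip is most likely to enter.
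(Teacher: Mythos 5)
Your plan is in essence the paper's argument: an orbifold BMY inequality applied to the pair $(\mathbb{P}^2,\alpha C)$, with local contributions per singularity type, followed by exactly the substitution you describe via Proposition \ref{count} (your reading of that count as $m^2-(d+4k+9l)=\sum_p 2\delta_p$ is correct, and it does eliminate all quadratic terms). The difference is in execution. The paper does not construct branched abelian covers or perform embedded resolutions at all: it quotes Langer's inequality $\sum_{p}3\bigl(\alpha(\mu_p-1)+1-e_{orb}(p;\mathbb{P}^2,\alpha C)\bigr)\le(3\alpha-\alpha^2)m^2-3\alpha m$ together with Langer's tabulated local orbifold Euler numbers for $A_1, A_3, D_4, A_5, A_7, A_{11}, D_{14}$, so the ``genuine work'' you locate in the blow-up chains for $A_{11}$ and $D_{14}$ is outsourced to the citation. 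Your Hirzebruch-style cover-and-resolve route would reprove those local numbers from scratch; it is heavier but self-contained, whereas the paper's route is a five-line computation once Langer's tables are accepted. Relatedly, your structural explanation of the coefficients $8k$ and $27l$ via genus/Euler characteristics of the components is not how they arise in the paper: there they fall out arithmetically from $\frac{5}{4}m^2-\frac{3}{2}m$ after the B\'ezout substitution, since $\frac{5}{4}\cdot 4-3=2$ and $\frac{5}{4}\cdot 9-\frac{9}{2}=\frac{27}{4}$, then everything is scaled by $4$.

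Two points in your plan deserve correction. First, your worry about log general type and pencil-type degenerations resolves much more simply than you suggest: in Langer's formulation one only needs the weight $\alpha$ to be admissible, and the paper takes $\alpha=\frac{1}{2}$, which is legitimate because $d,k,l\ge 1$ forces $m=d+2k+3l\ge 6$. What you \emph{do} need to check, and your proposal never addresses because it never fixes a weight, is that a single $\alpha$ lies simultaneously in all seven admissibility intervals for Langer's formulas ($[0,1]$, $(1/3,3/4]$, $(0,2/3]$, $(1/3,2/3]$, $(3/8,5/8]$, $(5/12,7/12]$, $(5/11,7/13]$); the value $\alpha=\frac{1}{2}$ does, but this is the one hypothesis-verification your argument cannot skip. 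Second, as a calibration check for your resolution computations: with $\alpha=\frac{1}{2}$ the correct local left-hand contributions are $\frac{9}{4}n_2+\frac{45}{8}t_3+\frac{117}{16}n_3+\frac{35}{4}t_5+\frac{189}{16}t_7+\frac{143}{8}t_{11}+\frac{719}{32}d_{14}$; note the paper's displayed intermediate coefficient $\frac{189}{4}$ for $t_7$ is a typo for $\frac{189}{16}$, as only the latter reproduces the stated $\frac{29}{4}t_7$ in the final inequality, so do not use that display to validate your $A_7$ resolution.
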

\begin{proof}
Let $C = \ell_{1} + ... + \ell_{d} + C_{1} + ... + C_{k} + \mathcal{E}_{1} + ... + \mathcal{E}_{l}$ be the divisor associated with our arrangement $\mathcal{C}$. In order to show our result we are going to use an orbifold BMY inequality by Langer \cite{Langer}, namely
$$(\star) \, : \quad \sum_{p \in {\rm Sing}(C)} 3\bigg(\alpha(\mu_{p}-1)+1-e_{orb}(p;\mathbb{P}^{2}_{\mathbb{C}},\alpha C)\bigg)\leq (3\alpha - \alpha^{2})m^{2}-3\alpha m,$$
where $e_{orb}(p;\mathbb{P}^{2}_{\mathbb{C}},\alpha C)$ is the local orbifold Euler number of a given singularity $p$, $\mu_{p}$ denotes the local Milnor number of $p$, and ${\rm deg} (C) = m$.  Using \cite[Theorem 8.7, Theorem 9.4.2]{Langer}, we obtain the following: \begin{itemize}
    \item if $p$ is singularity of type $A_{1}$, then $e_{orb}(p;\mathbb{P}^{2}_{\mathbb{C}},\alpha C) = (1-\alpha)^2$ for $\alpha \in [0, 1]$,
    \item if $p$ is singularity of type $D_{4}$, then $e_{orb}(p;\mathbb{P}^{2}_{\mathbb{C}},\alpha C) = \frac{(2-3\alpha)^2}{4}$ for $\alpha \in (0, 2/3]$,
    \item if $p$ is singularity of type $A_{3}$, then $e_{orb}(p;\mathbb{P}^{2}_{\mathbb{C}},\alpha C) = \frac{(3-4\alpha)^2}{8}$ for $\alpha \in (1/3, 3/4]$,
    \item if $p$ is singularity of type $A_{5}$, then $e_{orb}(p;\mathbb{P}^{2}_{\mathbb{C}},\alpha C) = \frac{(4-6\alpha)^2}{12}$ for $\alpha \in (1/3, 2/3]$,
    \item if $p$ is singularity of type $A_{7}$, then $e_{orb}(p;\mathbb{P}^{2}_{\mathbb{C}},\alpha C) = \frac{(5-8\alpha)^2}{16}$ for $\alpha \in (3/8, 5/8]$,
    \item if $p$ is singularity of type $A_{11}$, then $e_{orb}(p;\mathbb{P}^{2}_{\mathbb{C}},\alpha C) = \frac{(7-12\alpha)^2}{24}$ for $\alpha \in (5/12, 7/12]$,
    \item if $p$ is singularity of type $D_{14}$, then $e_{orb}(p;\mathbb{P}^{2}_{\mathbb{C}},\alpha C) = \frac{(7-13\alpha)^2}{24}$ for $\alpha \in (5/11, 7/13]$.
\end{itemize}
Since $m = d + 2k + 3l \geq 6$, we can take for our considerations $\alpha =\frac{1}{2}$, and from now one we are working with the pair $\bigg(\mathbb{P}^{2}_{\mathbb{C}}, \frac{1}{2}C\bigg)$.
We start with the left-hand side of $(\star)$, which is equal to
$$\frac{9}{4}n_{2}+\frac{45}{8}t_{3} + \frac{117}{16}n_{3} + \frac{35}{4}t_{5} + \frac{189}{4}t_{7} + \frac{143}{8}t_{11} + \frac{719}{32}d_{14}.$$
Now we look at the right-hand side of $(\star)$, namely
$$\frac{5}{4}(d+2k+3l)^{2} - \frac{3}{2}(d+2k+3l).$$
Using the naive combinatorial count presented in Proposition \ref{count}, we have
$$\frac{5}{4}(d+4k+9l+2n_{2}+4t_{3}+6n_{3}+6t_{5}+8t_{7}+12t_{11}+16d_{14}) - \frac{3}{2}(d+2k+3l).$$
Plugging the collected data above to $(\star)$ we obtain that
$$27l + 8k + n_{2} + \frac{3}{4}n_{3} \geq d + \frac{5}{2}t_{3} + 5t_{5} + \frac{29}{4}t_{7} + \frac{23}{2}t_{11}+ \frac{79}{8}d_{14},$$
which completes the proof.
\end{proof}

\section*{Conflict of Interests}
I declare that there is no conflict of interest regarding the publication of this paper.

\section*{Acknowledgement}
This note is a part of the author's Master Thesis written under supervision of Piotr Pokora.

\vskip 0.5 cm

\bigskip
Artur Bromboszcz,
Department of Mathematics,
University of the National Education Commission Krakow,
Podchor\c a\.zych 2,
PL-30-084 Krak\'ow, Poland.\\
Email Address: \verb}artur.bromboszcz@student.up.krakow.pl}
\bigskip

\begin{thebibliography}{000}
\bibitem{Cayley}
A. Cayley, On the Conic of Five-Pointic Contact at Any Point of a Plane Curve.
\textit{Philosophical Transactions of the Royal Society of London} \textbf{149}: 371 -- 400 (1859).

\bibitem{Coolidge}
J. L. Coolidge, \textit{A Treatise on Algebraic Plane Curves}. Oxford, Clarendon Press, 1931.

\bibitem{Singular}
W.~Decker, G.-M. Greuel, G.~Pfister, and H.~Sch\"onemann,
\newblock {\sc Singular} {4-1-1} --- {A} computer algebra system for polynomial computations. \newblock \url{http://www.singular.uni-kl.de}, 2018.

\bibitem{DIMP}
 A. Dimca, G. Ilardi, G. Malara, P. Pokora, Construction of free curves by adding osculating conics to a given cubic curve. \textbf{arXiv:2311.08913}.
 
\bibitem{DIPS}  A. Dimca, G. Ilardi, P. Pokora,  G. Sticlaru, Construction of free curves by adding lines to a given curve. \textit{Results Math.} \textbf{79}: Art. Id 11 - 31 pages (2024). 

\bibitem{Langer}
A. Langer, Logarithmic orbifold Euler numbers of surfaces with applications. \textit{Proc. London Math. Soc.} \textbf{86}: 358 -- 396 (2003).

\bibitem{S} K. Tono,  
On Orevkov's rational cuspidal plane curves. \textit{J. Math. Soc. Japan} \textbf{64}(2): 365--385 (2002).
\end{thebibliography}
\end{document}